\documentclass[12pt,letterpaper,reqno]{amsart}
\usepackage{amsmath}
\usepackage{amsfonts}
\usepackage{amssymb}
\usepackage{graphicx}
\usepackage{xcolor}
\usepackage{bm} 
\usepackage{hyperref}

\newtheorem{theorem}{Theorem}
\newtheorem{lemma}[theorem]{Lemma}
\newtheorem{proposition}[theorem]{Proposition}

\newtheorem{corollary}[theorem]{Corollary}

\newtheorem{problem}[theorem]{Problem}

\newcommand{\norm}[1]{\left\lVert #1 \right\rVert}

\allowdisplaybreaks

\newcommand{\ip}[2]{\langle #1,#2\rangle}

\theoremstyle{definition}
\newtheorem{remark}[theorem]{Remark}

\begin{document}
	\author{}

			\address{Y.~Huang, Department of Mathematical and Statistical Sciences, University of Alberta, Edmonton, AB T6G 2N8, Canada}
	
	\email{yhuang32@ualberta.ca}

	\address{S.~Myroshnychenko, Department of Mathematics and Statistics, University of the Fraser Valley, Abbotsford, BC V2S 7M8, Canada}
	
	\email{serhii.myroshnychenko@ufv.ca}

	\address{K.~Tatarko, Department of Pure Mathematics, University of Waterloo, Waterloo, ON N2L 3G1, Canada}
	
	\email{ktatarko@uwaterloo.ca}

	\address{V.~Yaskin, Department of Mathematical and Statistical Sciences, University of Alberta, Edmonton, AB T6G 2N8, Canada}
	
	\email{yaskin@ualberta.ca}

	\thanks{All authors were supported in part by NSERC}

	\subjclass[2020]{Primary 52A20, 52A40}
	
	\keywords{Convex body, section, centroid, Fourier transform, hyperbolic space, $s$-concave function}
	
\title[Analogues of questions of Gr\"unbaum and Loewner]{On hyperbolic and functional analogues of questions of Gr\"unbaum and Loewner}

\author[Y. Huang, S. Myroshnychenko, K. Tatarko, and V. Yaskin]{Yu Huang, Sergii Myroshnychenko, Kateryna Tatarko,\\ and Vladyslav Yaskin}

\begin{abstract} 
Myroshnychenko, Tatarko, and Yaskin constructed a body $K$  in $\mathbb{R}^n$, $n \geq 5$, with the property that there is exactly one hyperplane $H$ passing through $c(K)$, the centroid of $K$, such that the centroid of $K\cap H$ coincides with $c(K)$.
This construction provided answers to  questions of Gr\"unbaum and Loewner for $n\geq 5$, which are still open in dimensions $3$ and $4$. We study analogues of these questions in the settings of  hyperbolic space $\mathbb H^n$ and   $s$-concave functions  on $\mathbb R^n$.
\end{abstract}

\maketitle

\section{Introduction}

Let $K$ be a convex body in $\mathbb{R}^n$, i.e., a compact convex set with non-empty interior. The centroid of $K$ is the point
\begin{equation}\label{def-centroid}
c(K) = \frac{1}{|K|} \int_K x \, dx,
\end{equation}
where integration is with respect to Lebesgue measure and $|K|$ denotes the volume of $K$.

Consider the family of hyperplanes in $\mathbb R^n$ containing the centroid of $K$. If $K$ is centrally symmetric then $c(K\cap H)=c(K)$ for each hyperplane $H$ from this family. For general convex bodies this is no longer true. It is natural to ask  how many  sections with the property $c(K\cap H)=c(K)$ every  convex body $K$ in $\mathbb R^n$ has. The following problems were posed by Gr\"unbaum \cite{G1961} and Loewner \cite[Problem 28]{F}; see also \cite[A8]{CFG}.

\begin{problem}\label{GrunbaumProblem} (Gr\"unbaum)
	Is the centroid $c(K)$ of $K \subset \mathbb{R}^n$ the centroid of at least $n + 1$ different $(n-1)$-dimensional sections of $K$ through $c(K)$?
\end{problem}

\begin{problem}\label{LoewnerProblem} (Loewner)
	Let $\mu(K)$ be the number of hyperplane sections of $K$ passing through $c(K)$ whose centroid is the same as $c(K)$. Let $\mu(n) =\displaystyle \min_{K\in \mathcal K^n} \mu(K)$ where $\mathcal K^n$ is the class of all convex bodies in $\mathbb R^n$. What is the value of $\mu(n)$?

\end{problem}

It is easy to show that $\mu(2)=3$; as was noticed by Gr\"unbaum \cite{G1961} and Loewner \cite[Problem 28]{F}. If $n\ge 3$, Gr\"unbaum \cite{G1961} has shown that $\mu(n)\ge 1$;  see also \cite{MR}. 
Myroshnychenko, Tatarko, and Yaskin \cite{MTY2} have shown  that $\mu(n)=1$ for $n\ge 5$. The case of dimensions $n=3$ and $n=4$ is still open. It is natural to study the problem in other settings, where we can obtain the answer in all dimensions.
In this paper we show that, in  hyperbolic space $\mathbb H^n$ and in the case of $s$-concave functions on $\mathbb R^n$ with $-1/(n+1)<s<\infty$, the analogue of the number $\mu(n)$ equals 1 for all $n\ge 3$.

For other recent results about centroids of convex bodies the reader is referred to \cite{BHLL}, \cite{BHPS},  \cite{Fr}, \cite{FMY}, \cite{LY}, \cite{MNRY},  \cite{MSZ}, \cite{MTY1}, \cite{NRY}, \cite{PTW}, \cite{ShY},  \cite{SY}, \cite{SZ}.

\section{Preliminaries}

\subsection{Hyperbolic geometry}
We will start with the hyperboloid model of hyperbolic space $\mathbb H^n$. The reader is referred to \cite{R} for additional background information. Let $\mathbb R^{n,1}$ be the Minkowski space, which can be identified with $\mathbb R^{n+1}$ equipped with the Minkowski inner product
\[
\langle x, y\rangle_{n,1} =-x_0y_0+x_1y_1+\cdots+x_ny_n,
\]
where $x=(x_0,x_1, \ldots, x_n)$ and $y=(y_0,y_1, \ldots, y_n)$.

In the hyperboloid model, we define 
\[
\mathcal{H}^n=\{x\in\mathbb R^{n,1}:\ \langle x, x\rangle_{n,1}=-1,\ x_0>0\},
\]
endowed with the induced metric $g_{\mathcal{H}}$. The corresponding volume element we denote by $ d\mathrm{vol}_{\mathcal{H}}$.

For a  set $L\subset\mathcal{H}^n$ of positive volume, 
define its moment vector
\begin{equation}\label{eq:Z-def}
	Z(L)=\int_{L} x\, d\mathrm{vol}_{\mathcal{H}}(x) = (Z_0(L),\bm{Z}(L))
	 \in\mathbb R^{n,1}
\end{equation}
where $Z_0(L) \in \mathbb{R}$ and $\bm{Z}(L) \in \mathbb{R}^n$.
This point is not on $\mathcal H^n$, but if we normalize it properly, then we will get the center of mass of $L$. 
Let
\begin{equation}\label{eq:m-def}
	m(L)=\sqrt{-\ip{Z(L)}{Z(L)}_{n,1}}
	=\sqrt{Z_0(L)^2-|\bm{Z}(L)|^2}.
\end{equation}
Here and throughout the paper, for a vector $p \in \mathbb R^n$,   $|p|$   denotes its Euclidean norm.
Then   the centroid of $L$ is the point in $\mathcal{H}^n$ defined by
\begin{equation}\label{eq:C-on-H}
	C(L)=\frac{Z(L)}{m(L)}.
\end{equation}
This definition of centroid in $\mathcal{H}^n$ is analogous to the one in the spherical space (see \cite{Ga} for a study of centroids for discrete sets in constant curvature spaces).

Instead of the hyperboloid model, it will be more convenient to use the Poincar\'e model of $\mathbb H^n$ in the unit ball $\mathbb{B}^n$ in $\mathbb R^n$. The identification between the two models can be described geometrically as follows.  Consider the unit Euclidean ball $\mathbb{B}^n$ in the hyperplane $\{x_0=0\} \subset \mathbb R^{n,1}$. Given  a point $x$  in $\mathcal H^n$, consider the line segment connecting $x$ and $(-1,0,\ldots, 0)$, The line segment intersects $\mathbb{B}^n$ at some point $p$. 
The corresponding map $F:\mathbb{B}^n\to\mathcal{H}^n$, that sends $p$ to $x$, is given by
\begin{equation}\label{eq:Cayley}
	F(p)=\Big(\frac{1+|p|^2}{1-|p|^2},\,\frac{2p}{1-|p|^2}\Big),
	\qquad p\in\mathbb{B}^n.
\end{equation}

Using $F$ to pull back the  metric from $\mathcal H^n$, we obtain the metric in  the Poincar\'e model:
\begin{equation}\label{eq:Poincare-metric}
	g_{\mathbb{B}}=\frac{4}{(1-|p|^2)^2}\sum_{i=1}^n dp_i^2,
\end{equation}
and hence its volume element is
\begin{equation}\label{eq:volB}
	d\mathrm{vol}_{\mathbb{B}}
	= \frac{2^n}{(1-|p|^2)^n}\,dp,
\end{equation}
where $dp$ denotes  Lebesgue measure on $\mathbb R^n$.

Any two points in the Poincar\'e model can be connected by a unique geodesic segment. A set $K\subset \mathbb B^n$ is called {\it convex} if for any $p_1$ and $p_2$ in $K$, the geodesic segment connecting these points lies in $K$.    Let $K$ be a convex body in the Poincar\'e model, i.e., a compact convex set with non-empty interior. We define the  centroid   of $K$ as the  preimage of \eqref{eq:C-on-H} under the map $F$.

From \eqref{eq:Z-def}, using \eqref{eq:Cayley} and \eqref{eq:volB}, we obtain
\begin{align}
	Z_0(K) &= \int_{K}\frac{2^n\,(1+|p|^2)}{(1-|p|^2)^{n+1}}\,dp,\label{eq:Z0-unif}\\
	\bm{Z}(K) &= \int_{K}\frac{2^{n+1}\,p}{(1-|p|^2)^{n+1}}\,dp.\label{eq:Zvec-unif}
\end{align}
From  \eqref{eq:Cayley} we see  that if $x=F(p)$, then $p_i=x_i/(x_0+1)$, $i=1,\ldots, n$. Thus

\begin{equation}\label{eq:ball-centroid}
	C(K)=\frac{\bm{Z}(K)/m(K)}{Z_0(K)/m(K)+1}=\frac{\bm{Z}(K) }{Z_0(K)+m(K)},
\end{equation}
where $m(K)=\sqrt{Z_0(K)^2-|\bm{Z}(K)|^2}$.


For any vector $\xi$  on the unit sphere $S^{n-1}$, consider the hypersurface $\xi_{\mathbb B}^\perp$ in $\mathbb B^n$ passing through the origin defined by 
$$\xi_{\mathbb B}^\perp = \{x\in \mathbb B^n: \xi_1 x_1+ \cdots+\xi_n x_n=0\}.$$
Such a hypersurface is a totally geodesic submanifold in the Poincar\'e ball model of hyperbolic space (they are analogous to hyperplanes in the Euclidean space in the sense that every geodesic line on such a surface is also a geodesic in the ambient space). 

Let $K$ be a convex body in $\mathbb B^n$ that contains the origin in its interior. For any $\xi\in S^{n-1}$, the section of $K$ by the hypersurface $\xi_{\mathbb B}^\perp$ is a convex body in $\xi_{\mathbb B}^\perp$ and its centroid can be expressed analogously to \eqref{eq:ball-centroid}, with $n$ replaced by $n-1$:
\begin{equation}\label{eq:section-centroid}
	C(K\cap \xi_{\mathbb B}^\perp)= \frac{1}{Z_0(K\cap\xi_{\mathbb B}^\perp)+m(K\cap \xi_{\mathbb B}^\perp)} \int_{K\cap \xi_{\mathbb B}^\perp}\frac{2^{n}\,p}{(1-|p|^2)^{n}}\,dp,
\end{equation}

Analogous to Problem 2, we can consider the following question in hyperbolic space.
\begin{problem}\label{HyperbolicProblem}
	Let $K$ be a convex body in $\mathbb H^n$ and  $\eta(K)$ be the number of totally geodesic $(n-1)$-dimensional sections of $K$ passing through $c(K)$ whose centroid is the same as $c(K)$. Let $\eta(n) =\displaystyle \min_{K\in \mathcal K({\mathbb H}^n)} \eta(K)$ where $\mathcal K({\mathbb H}^n)$ is the class of all convex bodies in $\mathbb H^n$. What is the value of $\eta(n)$?
	\end{problem}

	Since the body $K\subset \mathbb B^n$ can be identified with a body in $\mathbb R^n$, we can apply to $K$ standard Euclidean concepts. 
	We say that a compact set $K \subset \mathbb{R}^n$ is star-shaped about the origin $0$ if for every point $x \in K$ each point of the interval $[0, x)$ is an interior point of $K$. The {\it Minkowski functional} of $K$ is defined by
	$$
	\|x\|_K = \min\{\lambda \geq 0:  x \in \lambda K \}.
	$$
	We say that $K$ is a {\it  star body} if it is compact, star-shaped about the origin and its Minkowski functional is a continuous function on $\mathbb{R}^n$.
	
	The {\it radial function} of a star body $K$ is defined by
	$$
	\rho_K(\xi) = \max \{\lambda > 0: \lambda \xi \in K \}, \quad \xi \in S^{n-1}.
	$$
	Observe that $\rho_K(\xi) = \|\xi\|_K^{-1}$ for any $\xi \in S^{n-1}$ and $\rho_K$ is positive and continuous on $S^{n-1}$.

	We say that $K$ is origin symmetric if $x\in K \Leftrightarrow -x \in K$. For an origin symmetric star body $K$, its radial function $\rho_K$ is an even function on the sphere, i.e.,  $\rho_K(\xi)=\rho_K(-\xi) $ for all $\xi\in S^{n-1}$.

\subsection{$s$-concave functions}	
	
Let us now discuss functional versions of Problem 2. Let $-\infty \le s\le \infty$.   A function $f: \mathbb R^n\to \mathbb [0,\infty)$ is called $s$-concave  if  
\begin{equation}\label{s-concave}
	f(\lambda x + (1-\lambda)y)\ge \left(\lambda f^s(x)+ (1-\lambda)f^s(y)\right)^{1/s},
\end{equation}
for all $x, y\in \mathbb R^n$ such that $f(x)\cdot f(y)>0$ and all $\lambda\in (0,1)$.

If $s =-\infty$, $0$, $\infty$, the definition above is understood in the sense of limits. In particular, 
$f$ is $\infty$-concave if 
$$f(\lambda x + (1-\lambda)y)\ge\max\{f(x),f(y)\},$$
for all $x, y\in \mathbb R^n$ such that $f(x)\cdot f(y)>0$ and all $\lambda\in (0,1)$. Such functions are constant multiples of indicator functions of convex sets. 

If $s=0$,  inequality \eqref{s-concave}  becomes 
$$f(\lambda x + (1-\lambda)y)\ge   f^\lambda(x) f^{1-\lambda}(y),$$
for all $x, y\in \mathbb R^n$   and all $\lambda\in (0,1)$. Such functions are called log-concave.

We will denote by $C_s(\mathbb R^n)$ the class of $s$-concave functions  on $\mathbb R^n$ with positive finite integrals. It is known that these classes become larger when $s$ gets smaller.

Below we will focus on  functions from $C_s(\mathbb R^n)$ with $-1/(n+1)<s<\infty$. If $f\in C_s(\mathbb R^n)$ with $s\ge 0$, then $$f(x)\le Ae^{-B|x|},$$ for all $x\in \mathbb R^n$ and some positive constants $A$ and $B$; see \cite[Lemma 2.2.1]{BGVV}. If $-1/(n+1)<s<0$,   then there is a constant $C>0$, such that 
\begin{equation}\label{s-conc}
	f(x)\le \frac{C}{1+|x|^{-1/s}},
\end{equation}
for all $x\in \mathbb R^n$; see \cite{B}. Thus, if $f\in C_s(\mathbb R^n)$ with $-1/(n+1)<s<\infty$, then its first moments exist, and we can define
 its centroid (or barycenter)   analogously to  \eqref{def-centroid}:
$$c(f) = \frac{\int_{\mathbb R^n} x f(x)\, dx}{\int_{\mathbb R^n} f(x)\, dx} .$$
If $H$ is an affine subspace of $\mathbb R^n$, then we denote
by 
$$c_H(f)=\frac{\int_{H} x f(x)\, dx}{\int_{ H} f(x)\, dx} $$ the centroid of the restriction of $f$ to the subspace $H$.

We will now formulate an analogue of Problem \ref{LoewnerProblem} for $s$-concave functions.

\begin{problem}\label{FunctionalProblem}
	Let $f \in C_s({\mathbb R}^n)$ with $-1/(n+1) <s<\infty$, and let $\nu(f)$ be the number of   $(n-1)$-dimensional affine subspaces $H$ of $\mathbb R^n$ passing through $c(f)$ such that $c_H(f)=c(f)$.  What is the value of $\nu_s(n) =\displaystyle \min_{f\in \mathcal C_s({\mathbb R}^n)} \nu(f)$?  
\end{problem}

\section{Main results}
\subsection{The hyperbolic case}\label{Section:H}

We will first show that $\eta (2)=3$. The proof is similar to that for the Euclidean case; see e.g., \cite[Section~2]{MTY2}.
To show that $\eta(2)\le 3$, consider an equilateral Euclidean triangle $\Delta$ in $\mathbb B^2$ with centroid at the origin. Observe that $\Delta$ is also convex in the hyperbolic sense, and the hyperbolic centroid of $\Delta$ is  at the origin, since the integral
$$
\int_{\Delta}\frac{p}{(1-|p|^2)^{3}}\,dp
$$ is invariant under rotations by $2\pi/3$ with respect to the origin.

Note that a chord whose hyperbolic length is bisected by the origin is precisely a chord whose Euclidean length is bisected by the origin. Since, in the Euclidean setting, the origin bisects exactly three chords of $\Delta$, the same holds in the hyperbolic setting.

Now consider an arbitrary convex body $K\subset \mathbb B^2$ with centroid at the origin, i.e.,
$$
\int_{K}\frac{p}{(1-|p|^2)^{3}}\,dp=0.
$$
Passing to polar coordinates, we obtain
\begin{equation*}
	\int_0^{2\pi} \Psi(\rho(\varphi)) \cos(\varphi) d\varphi = 0 \quad \text{and} \quad  \int_0^{2\pi}  \Psi(\rho(\varphi)) \sin(\varphi) d\varphi = 0,
\end{equation*}
where $$\Psi (s) = \int_0^s\frac{r^2}{(1-r^2)^3}\, dr,$$
and $\rho(\varphi)$ is the radial function of $K$   in polar coordinates.

Using the same argument as in \cite[Section~2]{MTY2}, one can show that the function $$ \Psi(\rho(\varphi)) - \Psi(\rho(\varphi+\pi))$$
has at least three roots in the interval $[0,\pi)$, which means that $\rho(\varphi) = \rho(\varphi+\pi)$ for at least three values of $\varphi\in [0,\pi)$. Thus, at least three chords of $K$ are bisected by the origin, i.e., $\eta(2)\ge 3$. Recalling that $\eta(\Delta)= 3$, we obtain $\eta(2)=3$.

We will now present the main result of this section. The proof is based on the Fourier transform of distributions. The reader is referred to \cite{GS} and \cite{K} for background information.

\begin{theorem}\label{Main:H}
	There exists a convex body $K \subset \mathbb{B}^n$,  $n \geq 3$, with  centroid at the origin, such that 
		$$
 C(K \cap \xi^{\perp})\in\{x\in \mathbb B^n: x_n>0\}
	$$
for all $\xi \ne \pm e_n$.
\end{theorem}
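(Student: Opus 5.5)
Following the Euclidean construction of \cite{MTY2}, I plan to perturb a centrally symmetric body in polar coordinates and use Fourier analysis to control the section centroids. Start from a small origin-symmetric convex body, say the Euclidean ball $rB^n$ with $r<1$. Such a body is hyperbolically convex and has all section centroids at the origin. Perturb its radial function: $\rho_\epsilon(\theta)=r+\epsilon g(\theta)$, with $g$ smooth on $S^{n-1}$ and $g=g_{\mathrm{even}}+g_{\mathrm{odd}}$.

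\emph{Centroid of $K$.} By \eqref{eq:ball-centroid} and \eqref{eq:Zvec-unif}, the centroid of $K$ is at the origin iff $\bm Z(K)=0$. In polar coordinates this reads
\[
\int_{S^{n-1}}\theta\,\Phi_n(\rho(\theta))\,d\theta=0,\qquad \Phi_n(t)=\int_0^t\frac{s^n}{(1-s^2)^{n+1}}\,ds .
\]

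\emph{Centroids of sections.} By \eqref{eq:section-centroid}, the centroid of $K\cap\xi^\perp$ lies in $\{x_n>0\}$ iff
\[
\int_{S^{n-1}\cap\xi^\perp}\theta_n\,\Phi_{n-1}(\rho(\theta))\,d\theta>0,
\]
where $\Phi_{n-1}$ uses the exponent $n-1$ on $s$ and $n$ on $(1-s^2)$. Both conditions concern only the odd parts of the functions $\theta_n\Phi_{n-1}(\rho)$ and $\theta\,\Phi_n(\rho)$.

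I would therefore work directly with the odd part. Define $h_k(\theta)=\tfrac12[\Phi_k(\rho(\theta))-\Phi_k(\rho(-\theta))]$ for $k=n-1,n$. The requirements are:
\begin{itemize}
\item[(i)] $\int_{S^{n-1}}\theta_j h_n\,d\theta=0$ for all $j$;
\item[(ii)] $R(\theta_n h_{n-1})(\xi)>0$ for all $\xi\neq\pm e_n$, where $R$ is the spherical Radon transform. Note that $\theta_n h_{n-1}$ is even, so $R$ is injective on it, and $R(\theta_n h_{n-1})(\pm e_n)=0$ automatically.
\end{itemize}

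For small $\epsilon$ we have $h_k\approx \epsilon\,\Phi_k'(r)\,g_{\mathrm{odd}}$. The two conditions then decouple to first order, since $\Phi_n'(r)$ and $\Phi_{n-1}'(r)$ are merely different positive constants. So I need an odd function $g$ with:
\begin{itemize}
\item[(a)] $g\perp\theta_j$ for all $j$;
\item[(b)] $R(\theta_n g)>0$ away from $\pm e_n$, with controlled vanishing at $\pm e_n$.
\end{itemize}
Following the Fourier-transform approach (the Radon transform of an even function is, up to constants, the Fourier transform of its $(-n+1)$-homogeneous extension), I would take $\theta_n g(\theta)=F(\theta)$ with $F$ even, prescribed via its Radon transform as
\[
R F(\xi)=(1-\xi_n^2)\,\bigl(a+b\,\xi_n^2\bigr)
\]
for suitable constants, or a similar zonal expression. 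Inverting $R$ on zonal even functions (via Funk--Hecke / the Fourier formula) gives a smooth $F$.

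Two things must then be checked. First, $F/\theta_n$ must be smooth: this needs $F$ to vanish on the equator $\theta_n=0$, which is where the freedom in $a,b$ is used. Second, $g=F/\theta_n$ must satisfy (a). By symmetry, orthogonality to $\theta_j$ for $j<n$ is automatic for zonal $g$, and the remaining condition is $\int \theta_n g=\int F=c\int RF=0$. This forces $RF$ to have zero mean, which conflicts with positivity.

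Here is the crux, and the main obstacle. In the Euclidean case this tension is exactly why the construction needs $n\ge5$. In the hyperbolic case the two weights differ: $\Phi_n$ carries $s^n/(1-s^2)^{n+1}$, while $\Phi_{n-1}$ carries $s^{n-1}/(1-s^2)^{n}$. So condition (i) need not coincide with $\int F=0$ once the perturbation is taken \emph{non-infinitesimally} in the radial direction.

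My plan is to use two perturbation terms at different radii: the body is a symmetric base plus an odd perturbation supported in a thin shell near radius $r_1$, plus a compensating one near radius $r_2$. Then
\[
\int\theta\,\Phi_n=\sum_i w_n(r_i)\,m_i,\qquad R(\theta_n\Phi_{n-1})=\sum_i w_{n-1}(r_i)\,R(\cdot)_i .
\]
The ratios $w_n(r)/w_{n-1}(r)=r/(1-r^2)$ vary with $r$, so the centroid condition can be met while the section functional keeps a positive dominant term. Concretely, I take $\rho(\theta)=r_1+\epsilon u(\theta)$ on part of the sphere, and handle the compensation by a small odd bump $g_2$ whose section integrals are negligible but whose moment is weighted strongly by choosing it near the boundary of $\mathbb B^n$ (large $1/(1-r^2)$). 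Showing that this can be done while keeping $K$ hyperbolically convex is what I expect to be the hardest part. Convexity survives small $C^2$ perturbations of the ball, but the bump must carry an $O(\epsilon)$ moment with $o(\epsilon)$ effect on sections, and exploiting the blow-up weight near $|p|=1$ is how I would try to achieve that. Positivity near $\pm e_n$ follows from the factor $(1-\xi_n^2)$ and a second-order expansion there.
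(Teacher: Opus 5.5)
\textbf{There is a genuine gap at the crux.} You diagnose the ball-base obstruction correctly and you isolate the right quantity, the weight ratio $r/(1-r^2)$. But the step meant to overcome the obstruction is only asserted, and for $n=3,4$ it is the entire content of the theorem.

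Consider first a piece of $K$ that reaches from radius about $r_1$ out to a radius $r_2$ bounded away from $r_1$ in some directions but not in the opposite ones. By convexity such a piece has size independent of $\epsilon$, so it cannot be an $O(\epsilon)$ correction with $O(\epsilon)$ moment. The only consistent reading of your plan is that the symmetric base $M$ itself has non-constant radial function and the odd perturbation is small.

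In that case your own linearization gives a first-order centroid term $\int_{S^{n-1}} w\,F\,d\theta$ with $w=\rho_M/(1-\rho_M^2)$ and $F$ even, while the section functional is a multiple of $g=RF$. Since $F$ is a positive multiple of $\widehat g$ (for the degree $-1$ extension of $g$), spherical Parseval turns the centroid term into $c\int_{S^{n-1}}\widehat\Phi\, g$ with $c>0$. Here $\Phi(x)=\|x\|_M^{-1}/(1-|x|^2\|x\|_M^{-2})$ is the degree $-1$ extension of $w$.

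So for $g\ge 0$, $g\not\equiv 0$, compensation is impossible unless $\widehat\Phi$ takes negative values, i.e.\ unless $\Phi$ fails to be positive definite. Your requirement of an ``$O(\epsilon)$ moment with $o(\epsilon)$ effect on sections'' is exactly this unproved statement, and placing weight near $\partial\mathbb B^n$ does not by itself produce it.

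That the ratio varies with direction is not enough. In $\mathbb R^n$ the analogous ratio is $\rho_M$, which also varies with $\theta$, yet for $n=3,4$ the construction fails because $\|x\|_M^{-1}$ is positive definite for every origin-symmetric convex $M$ (all such bodies are intersection bodies). That, and not the ball-base tension, is why \cite{MTY2} needs $n\ge 5$.

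\textbf{The missing ingredient.} The paper imports it from the hyperbolic Busemann--Petty problem \cite{Y1}: an origin-symmetric convex body $M\subset\mathbb B^n$, $n\ge 3$, with $C^\infty$ boundary, positive curvature and rotational symmetry about the $x_n$-axis, for which $\widehat\Phi(e_n)<0$. Given such $M$, one takes $g_\lambda=(1-\lambda)G+\lambda H$. Here $G\ge 0$ is supported near $\pm e_n$, so $\int\widehat\Phi G<0$. The second function is $H=|x|^{-1}-(4(x_1^2+\dots+x_{n-1}^2)+x_n^2)^{-1/2}$, which has $\widehat H\ge 0$, so $\int\widehat\Phi H=\int\Phi\widehat H>0$. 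One then finds $(\lambda_0,\varepsilon_0)$ by continuity.

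\textbf{Two further points your plan leaves open.} The paper's setup handles both.
\begin{enumerate}
\item The perturbation is defined by $\Psi(\rho_K)=\Psi(\rho_M)+\varepsilon\phi_\lambda$, with $\Psi$ exactly the section weight. The section moments are then exactly $4^n\pi^{n-1}\varepsilon g_\lambda(\xi)$, positive for $\xi\neq\pm e_n$ with no remainder. In your linearized version both the main term and the error vanish at $\pm e_n$, so positivity there needs an actual argument, not just the factor $1-\xi_n^2$.
\item Convexity is automatic in the paper for small $\varepsilon$, because $M$ is fixed with positive curvature and $\phi_\lambda$ is a fixed smooth function. Your near-boundary bump comes with no mechanism that would preserve convexity.
\end{enumerate}
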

\begin{proof}

Our goal is to construct a body $K$, with centroid at the origin such that
\begin{align}
	\bm{Z}_n(K\cap\xi_{\mathbb{B}}^\perp)
	&= \int_{K\cap\xi_{\mathbb{B}}^\perp}\frac{2^{n}\,p_n}{(1-|p|^2)^{n}}\,dp\label{eq:Zvecsec-unif}
\end{align}
is positive for all $\xi \ne \pm e_n$.

As was shown in \cite{Y1} (see the proof of Proposition 3.9), there exists 
   an origin-symmetric convex body  $M\subset \mathbb B^n$ with strictly positive principal curvatures and $C^\infty$ boundary such that 
 $$	\Phi(x)=\frac{\norm{x}_M^{-1}}{1-\bigl(|x|/\norm{x}_M\bigr)^2}$$
 is not a positive definite distribution on $\mathbb R^n$.	
 
 Moreover, we can assume that $M$ is rotationally invariant about the $x_n$-axis, and $\widehat\Phi (e_n)<0$. Since $\Phi$ is a homogeneous function of degree $-1$ that is infinitely smooth on $\mathbb R^n \setminus\{0\}$, $\widehat\Phi$ is a homogeneous function of degree $-n+1$ that is also infinitely smooth on $\mathbb R^n \setminus\{0\}$; see \cite[Corollary 3.17]{K}.

 Let $\Omega(e_n)$ and $\Omega(-e_n) \subset S^{n-1}$ be  open spherical balls centered at $e_n$ and $-e_n$ respectively such that $\widehat\Phi(\xi) < 0$ for all $\xi \in \Omega(\pm e_n).$   Define an even function $G \in C^{\infty}(S^{n-1})$ that is invariant under rotations about the $x_n$-axis and such that
	$$
	G(\xi)  = \begin{cases}
		\text{positive}, \quad \xi \in \Omega(\pm e_n) \backslash \{\pm e_n\};\\
		0, \quad \xi \in \{\pm e_n\} \cup S^{n-1} \backslash \Omega(\pm e_n).
	\end{cases}
	$$
	By construction,
\begin{equation}\label{int_G}
	\int_{S^{n-1}}\widehat\Phi(\xi) G(\xi) d\xi < 0.
\end{equation}
	Next, we define the function $H \in C^{\infty}(S^{n-1})$ as 
	$$
	H(x) = |x|^{-1} - (4(x_1^2 + \dots + x_{n-1}^2) + x_n^2)^{-\frac12}.
	$$
	Note that $H(x) > 0$ if $x\in S^{n-1}\setminus\{\pm e_n\}$ and $H(\pm e_n) = 0$.  Extending $H$ to $\mathbb R^n\setminus\{ 0\}$ as a homogeneous function of degree $-1$ and computing its Fourier transform using the well-known formulas 
	
	$$\left(|\cdot|^{-1} \right)^\wedge (x) = c_n |x|^{-n+1},$$ 	and 
	$$\left(|T y|^{-1} \right)^\wedge (x) = c_n |\det T|^{-1} |T^{-t}x|^{-n+1},$$
	where 	  $c_{n} = \frac{2^{n-1} \pi^{\frac{n}2} \Gamma\left(\frac{n-1}2\right)}{\Gamma\left(\frac12\right)}$ and $T$ is an invertible linear transformation on $\mathbb R^n$, we obtain	
	\begin{align*}
		\widehat{H}(x) 
		&=c_n \left(|x|^{-n+1} - \left(x_1^2 + \dots + x_{n-1}^2 + 4x_n^2\right)^{\frac{-n+1}2}\right).
	\end{align*}
Since $\widehat{H} (x) \ge 0$ for $x\in \mathbb R^n\setminus\{0\}$, an application of the spherical Parseval formula (see \cite[Lemma 3.22]{K}) gives
	\begin{equation}\label{int_H}
		\int_{S^{n-1}} \widehat\Phi(\xi) H(\xi) d\xi =  \int_{S^{n-1}} \Phi(\xi) \widehat{H}(\xi) d\xi > 0.
		\end{equation}
	Now, for $\lambda\in [0,1]$,   we define 
	$$
	g_\lambda (\xi) = (1-\lambda)G (\xi) + \lambda H (\xi), \qquad  \xi\in S^{n-1}.
	$$
	Observe that $g_\lambda\in C^\infty(S^{n-1})$,  $g_{\lambda}(\xi) > 0$ for all $\xi \ne \pm e_n$, and $g_\lambda(\pm e_n) = 0$.
 
  Extending $g_\lambda$ to  $ \mathbb R^n\setminus\{0\}$ as a homogeneous function of degree $-1$ and denoting  the Fourier transform of this extension by $\widehat{g_\lambda}$, we define a function $\phi_\lambda$ on $S^{n-1}$ by the formula
	$$\phi_\lambda(\xi) = \frac{1}{\xi_n} \widehat{g_\lambda}(\xi), \qquad \xi\in S^{n-1}.$$
	Note that $\widehat{g_\lambda}(\xi)=0$ when $\xi\in e_n^\perp$, and   defining $\phi_\lambda$ to be zero on $e_n^\perp$ makes it a $C^\infty$ function on $S^{n-1}$ (for details see \cite[p.~8]{MTY2}).  Also observe that $\phi_\lambda$ is odd.

Consider the strictly increasing function
	\begin{equation}\label{eq:def-Psi}
		\Psi(s) := \int_0^s \frac{r^{n-1}}{(1-r^2)^n}\,dr,\qquad s\in[0,1).
	\end{equation}
	For $\lambda\in[0,1]$ and  small enough $\varepsilon>0$, we define the radial
	function $\rho_{K_{\lambda,\varepsilon}}$  of a star body $K_{\lambda,\varepsilon}$ by
	\begin{equation}\label{eq:perturb}
		\Psi(\rho_{K_{\lambda,\varepsilon}}(\theta))
		= \Psi(\rho_M(\theta))+\varepsilon\,\phi_\lambda(\theta),
		\qquad \theta\in S^{n-1}.
	\end{equation}	
Since $M$ is convex with strictly positive principal curvatures, there is $\varepsilon_\ast>0$ such that	  $K_{\lambda,\varepsilon}$ is convex for all $0<\varepsilon <\varepsilon_\ast$ and all $\lambda\in[0,1]$.  Also, $\rho_{K_{\lambda,\varepsilon}}$ is rotationally invariant about the $x_n$-axis since $\rho_M$ and $\phi_\lambda$ are rotationally invariant and $\Psi$ is strictly increasing.

For every $\xi\in S^{n-1}$,	we   have 	
	\begin{align*}
		\bm{Z}_n(K_{\lambda,\varepsilon}\cap\xi_{\mathbb{B}}^\perp)
		&= \int_{K_{\lambda,\varepsilon}\cap\xi_{\mathbb{B}}^\perp}\frac{2^{n}\,p_n}{(1-|p|^2)^{n}}\,dp \\
		&= \int_{S^{n-1}\cap\xi^\perp}\int_0^{\rho_{K_{\lambda,\varepsilon}}(\theta)} r^{n-2} \frac{2^{n}r \, \theta_n}{(1-r^2)^{n}}\,dr\, d\theta\\
		&= 2^n\int_{S^{n-1}\cap\xi^\perp}  \Psi(\rho_{K_{\lambda,\varepsilon}}(\theta))  \, \theta_n \, d\theta\\
		&= 2^n\int_{S^{n-1}\cap\xi^\perp}  \left( \Psi(\rho_M(\theta))+\varepsilon\,\phi_\lambda(\theta)\right)   \theta_n \, d\theta\\
		&= 2^n \varepsilon\int_{S^{n-1}\cap\xi^\perp}  \phi_\lambda(\theta)   \, \theta_n \, d\theta\\
		& = \frac{2^n \varepsilon}{\pi} \left(\phi_\lambda(x) x_n\right)^\wedge (\xi)  = \frac{2^n \varepsilon}{\pi} \left(\widehat g_\lambda \right)^\wedge (\xi) = 4^n\pi^{n-1}\varepsilon g_{\lambda}(\xi)\ge 0.
	\end{align*}
	Above we used the fact that if $f$ is an even continuous function of degree $-n+1$ on $\mathbb R^n\setminus\{0\}$, then its Fourier transform is a homogeneous function of degree $-1$ whose restriction to $S^{n-1}$ equals $$\widehat f (\xi) = \pi \int_{S^{n-1}\cap \xi^\perp} f(\theta)\, d\theta,\qquad \xi\in S^{n-1},$$
	see \cite[Lemma 3.7]{K}.

	We will now choose $\lambda$ and $\varepsilon$ so that the centroid of $K_{\lambda,\varepsilon}$ is at the origin. 
First, we note that
\begin{align*}
	\bm{Z}_i(K_{\lambda,\varepsilon})  = \int_{K_{\lambda,\varepsilon}}\frac{2^{n+1}\,p_i}{(1-|p|^2)^{n+1}}\,dp = 2^{n+1} \int_{S^{n-1}}\int_0^{\rho_{K_{\lambda,\varepsilon}}(\theta)}   \frac{r^n\, \theta_i}{(1-r^2)^{n+1}}\,dr\, d\theta =0
\end{align*}
for $i = 1, \dots, n-1$, where we used that $\rho_{K_{\lambda,\varepsilon}}$ is rotationally invariant about the $x_n$-axis. Therefore, it remains to show that $\bm{Z}_n(K_{\lambda,\varepsilon}) = 0$. We have	
\begin{align*}	\bm{Z}_n(K_{\lambda,\varepsilon})  = \int_{K_{\lambda,\varepsilon}}\frac{2^{n+1}\,p_n}{(1-|p|^2)^{n+1}}\,dp  = 2^{n+1} \int_{S^{n-1}}\int_0^{\rho_{K_{\lambda,\varepsilon}}(\theta)}   \frac{r^n\, \theta_n}{(1-r^2)^{n+1}}\,dr\, d\theta.
	\end{align*}	
Considering the latter as a function of  $\varepsilon$, we will obtain  its expansion for small $\varepsilon>0$.
	Using  \eqref{eq:perturb} with $\varepsilon =0$, we have  
	$$	\Psi(\rho_{K_{\lambda,\varepsilon}}(\theta))\left.\right|_{\varepsilon=0}
	= \Psi(\rho_M(\theta)),   \mbox{ i.e., }   \rho_{K_{\lambda,\varepsilon}}(\theta)\left.\right|_{\varepsilon=0}=\rho_M(\theta), \mbox{ for all } \theta\in S^{n-1}.$$
	Additionally, differentiating \eqref{eq:perturb}, we get
$$\left.\frac{d}{d\varepsilon}	\rho_{K_{\lambda,\varepsilon}}(\theta) \right|_{\varepsilon=0}=\frac{\phi_{\lambda}(\theta)}{\Psi'(\rho_M(\theta))}=\frac{\phi_{\lambda}(\theta)(1-\rho_M^2(\theta))^n}{ \rho_M^{n-1}(\theta)}.$$
Thus,
\begin{align*} \int_0^{\rho_{K_{\lambda,\varepsilon}}(\theta)}&  \frac{ r^n  }{(1-r^2)^{n+1}}\,dr \\
	& = \int_0^{\rho_M(\theta)}   \frac{ r^n  }{(1-r^2)^{n+1}}\,dr + \varepsilon     \frac{ \rho_M^n(\theta) }{(1-\rho_M^2(\theta))^{n+1}}
\frac{\phi_{\lambda}(\theta)(1-\rho_M^2(\theta))^n}{ \rho_M^{n-1}(\theta)} +\varepsilon^2 R_{\lambda,\varepsilon} (\theta)\\
&= \int_0^{\rho_M(\theta)}   \frac{ r^n  }{(1-r^2)^{n+1}}\,dr + \varepsilon     \frac{ \rho_M(\theta) }{1-\rho_M^2(\theta)}
 \phi_{\lambda}(\theta) +\varepsilon^2 R_{\lambda,\varepsilon} (\theta),
 \end{align*}
 where the last term is the remainder in the Taylor expansion.

Since $M$ is origin-symmetric, its radial function $\rho_M$ is an even function on the sphere, and thus
\begin{align*}  \int_{S^{n-1}}\int_0^{\rho_M(\theta)}   \frac{r^n\, \theta_n}{(1-r^2)^{n+1}}\,dr\, d\theta=0.
\end{align*}
Therefore,
\begin{align*}	\bm{Z}_n({K_{\lambda,\varepsilon}}) 
	& =  2^{n+1}\varepsilon \int_{S^{n-1}}  \Phi(\theta) \theta_n\phi_{\lambda}(\theta) \, d\theta + \varepsilon^2 \bar R_{\lambda,\varepsilon}\\
		& =  2^{n+1}\varepsilon \int_{S^{n-1}}  \Phi(\theta) \widehat g_{\lambda}(\theta) \, d\theta + \varepsilon^2 \bar R_{\lambda,\varepsilon}\\
				& =  2^{n+1}\varepsilon \int_{S^{n-1}}  \widehat \Phi(\theta)  g_{\lambda}(\theta) \, d\theta + \varepsilon^2 \bar R_{\lambda,\varepsilon},
\end{align*}
where
$$  \bar R_{\lambda,\varepsilon} = 2^{n+1}\int_{S^{n-1}} R_{\lambda,\varepsilon}(\theta)\, \theta_n \, d\theta.$$
Note that $\bar R_{\lambda,\varepsilon}$ is a continuous function of $\varepsilon$ and $\lambda$. 

	Define
	$$F(\lambda,\varepsilon)=\int_{S^{n-1}} \widehat\Phi(\xi) g_\lambda (\xi) \, d\xi+\varepsilon 2^{-n-1}  \bar R_{\lambda,\varepsilon}.$$
	
Using \eqref{int_G} and \eqref{int_H}, we obtain
	$$F(0,0)=\int_{S^{n-1}} \widehat\Phi(\xi)    G(\xi) \, d\xi<0$$
	and 
	$$F(1,0)=\int_{S^{n-1}}  \widehat\Phi(\xi) H(\xi) \, d\xi>0.$$

	Since $F(\lambda, \varepsilon)$ is a continuous map on $[0,1] \times [0, \varepsilon_\ast]$, there exists a small $\varepsilon_0>0$ and $\lambda_0 \in [0,1]$ such that $F(\lambda_0, \varepsilon_0) = 0$, implying that  the centroid of $K_{\lambda_0,\varepsilon_0}$ is at the origin.
	\end{proof}
	
	Theorem 	\ref{Main:H} implies that	$\eta (n)\le 1$ for $n\ge 3$. As we will see later in Remark \ref{rem-function}, every convex body $K$ in $\mathbb H^n$, $n\ge 3$, has at least one hyperplane section $H$ passing through the centroid of $K$ such that $c(K\cap H) = c(K)$. Thus we obtain the following corollary. 
	
\begin{corollary}\label{H:n_ge_3}
	$\eta (n)=1$ for $n\ge 3$.
		\end{corollary}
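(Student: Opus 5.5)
The corollary needs two inequalities. For the upper bound $\eta(n)\le 1$ we use the body from Theorem~\ref{Main:H}. Here is how I would proceed.

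\emph{Upper bound.} Let $K=K_{\lambda_0,\varepsilon_0}\subset\mathbb B^n$ be the body from Theorem~\ref{Main:H}. Its centroid is the origin. Every totally geodesic hypersurface through the origin in the Poincar\'e model is of the form $\xi_{\mathbb B}^\perp$ for some $\xi\in S^{n-1}$, and $\xi$ and $-\xi$ give the same section. For $\xi\ne\pm e_n$ the theorem gives $C(K\cap\xi_{\mathbb B}^\perp)\in\{x_n>0\}$, so this centroid is not the origin. The only remaining section is $\xi=\pm e_n$, so $\eta(K)\le 1$.

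For completeness I would also check directly that this last section does have centroid at the origin. The set $K\cap e_n^\perp$ is invariant under rotations about the $x_n$-axis, and it lies inside $e_n^\perp$. Hence every component of the integral in \eqref{eq:section-centroid} vanishes: the components $1,\dots,n-1$ vanish by rotational symmetry, and the $n$-th vanishes because $p_n=0$ on $e_n^\perp$. Therefore $\eta(K)=1$ and $\eta(n)\le 1$.

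\emph{Lower bound.} We must show that every convex body $K$ in $\mathbb H^n$, $n\ge3$, has at least one totally geodesic section through $C(K)$ with the same centroid. Since isometries of $\mathbb H^n$ preserve centroids and totally geodesic hypersurfaces, we may move $C(K)$ to the origin of $\mathbb B^n$. In the hyperboloid model, the tangent space $T_{C(K)}\mathcal H^n$ can be identified with $e_n^\perp$-type directions, namely vectors orthogonal to $C(K)$ in the Minkowski sense. For a unit tangent direction $\xi\in S^{n-1}$, consider the tangential part of the section moment,
\[
v(\xi)=\int_{K\cap\xi_{\mathbb B}^\perp}\frac{2^n p}{(1-|p|^2)^n}\,dp\in\xi^\perp .
\]
This is a continuous tangent vector field on $S^{n-1}$. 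It is also odd: $\xi_{\mathbb B}^\perp=(-\xi)_{\mathbb B}^\perp$, so $v(-\xi)=v(\xi)$ as a vector, and hence it defines a field on projective space.

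The key step is to apply the topological argument that Gr\"unbaum uses in the Euclidean case (as in \cite{G1961}, \cite{MR}); I expect this to be what the paper's Remark~\ref{rem-function} formalizes. One way to get a zero directly is as follows. Let $\xi_0$ maximize or minimize a suitable potential function. A natural candidate is
\[
h(\xi)=\int_{K}\Psi\text{-type weight}\cdot|\langle p,\xi\rangle|\,dp,
\]
or equivalently a hyperbolic analogue of the function whose gradient is $v$.

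I would first try to realize $v$ as the spherical gradient of a function $h$ on $S^{n-1}$. In the Euclidean proof one uses $h(\xi)=\int_K|\langle x,\xi\rangle|\,dx$ together with $\int_K x\,dx=0$. Differentiating $h$ gives $\int_K \operatorname{sign}\langle x,\xi\rangle\, x\,dx$, which is not $v$ directly. So instead I would use a degree argument: $v$ is a tangent field on $S^{n-1}$ that is even in $\xi$. If $v$ had no zeros, it would give a nonvanishing tangent field on $\mathbb{RP}^{n-1}$, which is impossible for odd $n-1$. For even $n-1$, the Euclidean argument of Gr\"unbaum uses the centroid condition via a mean-value or Borsuk--Ulam step. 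In our setting I would mimic this with the weight $(1-|p|^2)^{-n}$ in place of Lebesgue measure, writing everything in polar coordinates with $\Psi$ as in \eqref{eq:def-Psi}. The condition $C(K)=0$ gives $\int_{S^{n-1}}\tilde\Psi(\rho_K(\theta))\theta\,d\theta=0$, where $\tilde\Psi(s)=\int_0^s r^n(1-r^2)^{-n-1}dr$.

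\emph{Main obstacle.} The hardest step is this lower-bound existence argument. The upper bound is immediate from Theorem~\ref{Main:H}. For the lower bound, I would rely on the paper's later Remark~\ref{rem-function} (the general functional/measure version). The only thing to verify is that the hyperbolic density $(1-|p|^2)^{-n}$ restricted to $K$ fits that framework. It does, since after moving $C(K)$ to the origin the problem becomes one about sections through $0$ of the measure with radial density on the star body $K$.
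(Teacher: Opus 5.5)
Your proof follows the paper's. Theorem~\ref{Main:H} gives $\eta(n)\le 1$, since every totally geodesic hypersurface through the origin is some $\xi_{\mathbb B}^\perp$ and $\pm\xi$ give the same one. Remark~\ref{rem-function}, i.e.\ Lemma~\ref{Lemma:MR} applied to the density $(1-|x|^2)^{-n}$ on $K$ after an isometry moves $C(K)$ to the origin, gives $\eta(n)\ge 1$. Your direct check of the $e_n^\perp$ section is correct but not needed.

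You should delete the topological sketch that comes before your appeal to the remark, because it is wrong as written. An even field $v(-\xi)=v(\xi)$ does not descend to a tangent field on $\mathbb{RP}^{n-1}$. Descending requires $v(-\xi)=-v(\xi)$, because the antipodal map has differential $-\mathrm{id}$. The parity is also reversed: $\chi(\mathbb{RP}^{m})=0$ for odd $m$, so nonvanishing fields do exist there.

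The potential you were looking for is the half-space measure $F(u)=\int_{\{\langle x,u\rangle\ge 0\}}f(x)\,dx$. Here it is proportional to the hyperbolic volume of the part of $K$ on one side of $u_{\mathbb B}^\perp$. Its derivative along $\cos\varphi\,u+\sin\varphi\,v$ at $\varphi=0$ is exactly $\int_{u^\perp}\langle y,v\rangle f(y)\,dy$. So any minimizer of $F$ gives the required section. This uses neither the centroid condition nor the parity of $n$, and it is the paper's proof of Lemma~\ref{Lemma:MR}.
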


 \begin{remark}
 	Using a similar construction  for the sphere $S^{n}\subset \mathbb R^{n+1}$, one can show that the spherical analogue of the number $\eta(n)$ is equal to 1 for all $n\ge 5$. 
 	 However, as in the Euclidean space, our construction does not work in dimensions $n=3$ and $n=4$.  Thus, we omit the details. 
 	\end{remark}

			\subsection{The functional case}

			The following lemma is a functional analogue of the remark on p.~352 in \cite{MR}. One can also use topological methods as in  \cite[Lemma~8]{MR}, but we prefer to give a simple proof using analysis.
			\begin{lemma}\label{Lemma:MR}
				Let  $f\in C_s(\mathbb R^n)$ with $-1/(n+1)<s<\infty$  and $0 \in \text{int}\left(\text{supp}(f)\right)$. Then there exists at least one direction $u \in S^{n-1}$ such that 
				$$
				\int_{u^\perp} x f(x) \, dx = 0.
				$$
			\end{lemma}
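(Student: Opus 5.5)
The idea is to realize the vector field $u\mapsto \int_{u^\perp} x f(x)\,dx$ as the tangential gradient of a continuous function on $S^{n-1}$, and then take a point where that function attains its maximum.

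\textbf{Step 1: the function.} For $u\in S^{n-1}$ set
$$
g(u)=\int_{\{x:\ \langle x,u\rangle>0\}} f(x)\,dx ,
$$
the mass of $f$ on an open half-space. Passing to polar coordinates,
$$
g(u)=\int_{\{\theta\in S^{n-1}:\ \langle \theta,u\rangle>0\}} R(\theta)\,d\theta, \qquad R(\theta)=\int_0^\infty r^{n-1} f(r\theta)\,dr .
$$
Similarly, polar coordinates inside the $(n-1)$-dimensional subspace $u^\perp$ give $x\,dx=r\theta\, r^{n-2}\,dr\,d\theta$, so
$$
\int_{u^\perp} x f(x)\,dx=\int_{S^{n-1}\cap u^\perp}\theta\, R(\theta)\,d\theta .
$$
Both the half-space mass and the section moment are therefore expressed through the same function $R$.

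\textbf{Step 2: $R$ is continuous and positive.} This is the main technical point. An $s$-concave function is continuous on the interior of its support, and in direction $\theta$ the support is a segment $[0,\rho(\theta)\theta]$. Because $0\in\operatorname{int}(\operatorname{supp} f)$ and the support is convex, its radial function $\rho$ is continuous with values in $(0,\infty]$. The decay bounds quoted earlier ($Ae^{-B|x|}$ for $s\ge 0$, and \eqref{s-conc} for $-1/(n+1)<s<0$) make $r^{n-1}f(r\theta)$ uniformly integrable in $\theta$, since $(n-1)+1/s<-1$ in the second case.

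To handle the discontinuity of $f$ at $\partial\operatorname{supp} f$, truncate at $r\le (1-\delta)\rho(\theta)$, where $f$ is continuous. Then let $\delta\to 0$: the remaining tails near the boundary are small uniformly, because $f$ is bounded on compact sets. This gives continuity of $R$. Positivity follows from $f>0$ near the origin.

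\textbf{Step 3: the gradient.} Extend $g$ to $\mathbb R^n\setminus\{0\}$ as a homogeneous function of degree $0$. For $v\perp u$, the hemisphere $\{\langle\theta,u+tv\rangle>0\}$ differs from $\{\langle\theta,u\rangle>0\}$ by a thin lune around $S^{n-1}\cap u^\perp$. Its signed width at $\theta$ is $t\langle\theta,v\rangle+o(t)$. Continuity of $R$ then gives
$$
\frac{d}{dt}\Big|_{t=0} g(u+tv)=\int_{S^{n-1}\cap u^\perp}\langle\theta,v\rangle R(\theta)\,d\theta=\Big\langle \int_{u^\perp}x f(x)\,dx,\ v\Big\rangle .
$$
The vector $\int_{u^\perp}xf(x)\,dx$ already lies in $u^\perp$. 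So $g$ is differentiable along the sphere, with tangential gradient equal to exactly this vector. If one wants to avoid the informal lune argument, I would write $g(u)$ in coordinates adapted to $u$ and differentiate the boundary term directly.

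\textbf{Step 4: conclusion.} Since $g$ is continuous on the compact sphere $S^{n-1}$ (by continuity of $R$), it attains its maximum at some $u_0$. At $u_0$ every tangential directional derivative vanishes, so $\langle \int_{u_0^\perp}xf\,dx, v\rangle=0$ for all $v\perp u_0$. As the vector itself lies in $u_0^\perp$, it must be zero, which proves the lemma.

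This argument avoids the hairy-ball theorem, which would only cover odd $n$. The main obstacle is Step 2 together with the differentiation in Step 3, i.e., the regularity of $R$ when $f$ jumps at the boundary of its support. The hypothesis $0\in\operatorname{int}(\operatorname{supp} f)$ is precisely what makes $\rho$ continuous and positive, and that is what makes Step 2 work.
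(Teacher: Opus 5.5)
Your proof is correct and follows essentially the same route as the paper. The paper also extremizes the half-space mass $F(u)=\int_{\{\langle x,u\rangle\ge 0\}}f(x)\,dx$ over $S^{n-1}$, shows that its derivative along the great circle $\cos\varphi\,u+\sin\varphi\,v$ equals $\int_{u^\perp}\langle y,v\rangle f(y)\,dy$, and concludes at an extremum (a minimum there, a maximum in yours).

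The only difference is in how the derivative formula is verified. You pass through the radial integral $R(\theta)$ and use its continuity on the sphere, which is where you spend the hypothesis $0\in\operatorname{int}(\operatorname{supp} f)$. The paper instead computes the limit directly in the coordinates $x=y+tu$, using dominated convergence with the decay bound \eqref{s-conc} and the almost-everywhere continuity of $f$ on $u^\perp$.
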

			\begin{proof} Since an $s_1$-concave function is also $s_2$-concave for all $s_2<s_1$, we can assume that $f$ is   $s$-concave with $-1/(n+1)<s<0$.
				As was mentioned in the introduction, the integrals $$
				\int_{u^\perp} x f(x) \, dx 
				$$
				are well defined for all $u\in S^{n-1}$.
				
				Consider the following function of $u \in S^{n-1}$,
				$$
				F(u) = \int_{\{x: \, \langle x, u\rangle \geq 0\} } f(x) \, dx.
				$$
				$F$ is continuous on $S^{n-1}$ since $f$ is integrable, and thus $F$ attains its extreme values. 
				
			Fix $u\in S^{n-1}$ and  let $v$ be a unit vector orthogonal to $u$. For a real number  $\varphi$ close to zero, define $$u_v(\varphi) = \cos\varphi\, u +\sin \varphi\, v.$$ We claim that 
				\begin{equation}\label{formula:der}
					\frac{d}{d\varphi} F(u_v(\varphi)) \Big|_{\varphi=0} =  \int_{u^\perp} \langle y,v\rangle f(y) \, dy.
					\end{equation}
				Indeed,
				\begin{align}\label{eqn:deriv}\frac{d}{d\varphi} F(u_v(\varphi)) \Big|_{\varphi=0} &=\lim_{\varphi\to 0} \frac1{\varphi} \left(F(u_v(\varphi))- F(u)\right)		\notag	\\
					&=\lim_{\varphi\to 0} \frac1{\varphi} \left( \int_{H^+} f(x) \, dx - \int_{H^-} f(x) \, dx\right)	,		
					\end{align}
						where $$H^+ = \{ x\in \mathbb R^n: \langle  x , u\rangle \leq 0 \ \text{and}\ \langle  x , u_v(\varphi)\rangle \geq 0 \}$$ and $$H^- = \{ x\in \mathbb R^n: \langle  x , u\rangle \geq 0 \ \text{and}\ \langle  x , u_v(\varphi)\rangle \leq 0 \}.$$
						It is enough to compute the limit in \eqref{eqn:deriv} as $\varphi\to 0^+$.  The case $\varphi\to 0^-$ will give the same result, since we can just replace $\varphi$ with $-\varphi$ and $v$ with $-v$.

						First, we will evaluate   $\lim_{\varphi \to 0^+}\frac{1}{\varphi}\int_{H^+} f(x) \, dx$.
							Note that any point $x \in H^+$ can be represented as $x = y + t u$, where $y \in u^\perp$, $ \langle y,v\rangle \ge 0$, and  $t \in [-\tan\varphi \langle y,v\rangle, 0]$.  Then
					\begin{align*}
					\lim_{\varphi\to 0^+} \frac1{\varphi}	\int_{H^+} f(x) \, dx & = \lim_{\varphi\to 0^+} \frac1{\varphi}\int\limits_{\{ y\in u^\perp:\langle y,v\rangle\ge 0\}} \left( \int\limits_{-\tan\varphi \langle y,v\rangle }^0 f(y + tu)\, dt \right) \,dy\\
					& = \lim_{\varphi\to 0^+} \frac1{\varphi}\int\limits_{\{ y\in u^\perp:\langle y,v\rangle\ge 0\}} \left(\int\limits_{-1 }^0 \tan\varphi \langle y,v\rangle f\left(y + \tan\varphi \langle y,v\rangle tu\right)\, dt \right)\, dy\\
					& = \int\limits_{\{ y\in u^\perp:\langle y,v\rangle\ge 0\}} \left( \int\limits_{-1 }^0 \lim_{\varphi\to 0^+}\left( \frac{\tan\varphi}{\varphi}  \langle y,v\rangle f\left(y + \tan\varphi \langle y,v\rangle tu\right)\right)\, dt \right)\, dy\\
					& = \int\limits_{\{ y\in u^\perp:\langle y,v\rangle\ge 0\}} \left(\int\limits_{-1 }^0   \langle y,v\rangle f\left(y \right) \, dt \right)\, dy\\
				& = \int\limits_{\{ y\in u^\perp:\langle y,v\rangle\ge 0\}}    \langle y,v\rangle f\left(y \right) \,   dy.
						\end{align*}						
							Above we used  the Dominated Convergence Theorem  to move the limit inside the integrals, since	by \eqref{s-conc} for small $\varphi$ 	we have					
				\begin{align*}
						\left| \frac{\tan\varphi}{\varphi}  \langle y,v\rangle f\left(y + \tan\varphi \langle y,v\rangle tu\right) \right| & \leq  2 |\langle y,v\rangle| f\left(y + \tan\varphi \langle y,v\rangle tu\right)\\
						& \leq  2 C |\langle y,v\rangle| {(1+|y + \tan\varphi \langle y,v\rangle tu|^{-1/s})^{-1}}\\
						& \le 2 C |\langle y,v\rangle| {(1+|y |^{-1/s})^{-1}},
					\end{align*}
						and the  latter is an integrable function on $u^\perp$.

						The assumption that the origin lies in the interior of the support of 
						$f$  was used in computing the limit 	$$\lim_{\varphi\to 0^+}f\left(y + \tan\varphi \langle y,v\rangle tu\right)= f\left(y\right).$$ Since $f$ is continuous in the interior of its support, which is a convex set,   the equality above holds for almost every  $y\in u^\perp$, except possibly on a set of  zero $(n-1)$-dimensional measure.

						Thus, we obtain
						$$\lim_{\varphi\to 0^+} \frac1{\varphi}	\int_{H^+} f(x) \, dx = \int\limits_{\{ y\in u^\perp:\langle y,v\rangle\ge 0\}} \langle y,v\rangle f(y)\, dy.$$
						Similarly, one can show 
							$$\lim_{\varphi\to 0^+} \frac1{\varphi}	\int_{H^-} f(x) \, dx =  - \int\limits_{\{ y\in u^\perp:\langle y,v\rangle\le 0\}} \langle y,v\rangle f(y)\, dy.$$
						Formula \eqref{formula:der} follows by subtracting the last two limits, and recalling that the limit as $\varphi\to 0^-$ gives the same answer.
							
					 Let $u_0 \in S^{n-1}$ be a direction, where $F$ attains its minimum. Then,   formula \eqref{formula:der} implies
					 $$\int_{u_0^\perp} \langle y,v\rangle f(y) \, dy=0,$$ for any $v\in u_0^\perp$, which yields the statement of the lemma.

			\end{proof}
			
					\begin{remark}\label{rem-function}
						Lemma~\ref{Lemma:MR} remains valid if $f$ is a continuous function supported on a star body. In particular if $$f(x) = \frac{1}{(1-|x|^2)^{n}} $$ supported on a star body $K$ that lies in the interior of $\mathbb B^n$, then there is $u\in S^{n-1}$ such that
						$$\int_{K\cap u^\perp}\frac{x}{(1-|x|^2)^{n}}\,dx=0.$$ This is precisely what we needed for the proof of Corollary \ref{H:n_ge_3}.
						
						\end{remark}
			
			\begin{proposition}\label{Main:logconcave}
				There exists   $f\in C_s(\mathbb R^n)$  with $-1/(n+1)<s<\infty$ and  centroid at the origin such that 
					$$
				\int_{u^\perp} x_n f(x) \, dx >0
				$$
				for all $u\in S^{n-1}$ other than $\pm e_n$.
				\end{proposition}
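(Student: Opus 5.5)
The plan is to imitate the proof of Theorem~\ref{Main:H}. I would perturb an origin-symmetric, strictly $s$-concave function in a way that makes every central section moment $\int_{u^\perp}x_nf$ equal to $\varepsilon g_\lambda(u)\ge 0$. At the same time the total moment $\int x_n f$ is driven to zero by the intermediate value argument in $\lambda$. Since $C_{s_1}\subset C_{s_2}$ for $s_1>s_2$, it suffices to treat large $s>0$, i.e.\ to produce $f$ with $f^s$ concave on its support.

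Indicator-type or radially homogeneous profiles will not work. If $f(r\theta)=h(r/\rho_K(\theta))$, or $f=w(|x|)1_K$, then the first-order change of the centroid is governed by $\int\|\theta\|_M^{-1}\theta_n\phi\,d\theta$. That is the Euclidean situation, which fails for $n=3,4$ because $\|x\|_M^{-1}$ is positive definite for convex $M$ in low dimensions. The key point is therefore to use perturbations whose mass along each ray sits at a prescribed radius $r(\theta)$ that need not come from a convex body.

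\textbf{Setup.} Fix the base function $f_0(x)=(1-|x|^2)_+^{1/s}$, so that $f_0^s$ has Hessian $-2I$ on the unit ball. Choose an even, $C^\infty$, rotationally invariant (about the $x_n$-axis) function $r:S^{n-1}\to[1/4,1/2]$ such that $\Phi(x)=|x|^{-1}r(x/|x|)$ is not positive definite, with $\widehat\Phi(e_n)<0$. This is possible since, e.g., $\|x\|_L^{-1}$ for a suitable non-convex body $L$ of revolution fails positive definiteness in every dimension $n\ge 3$; one could also simply take $r$ so that $\widehat\Phi$ is negative near $\pm e_n$. Then build $G,H,g_\lambda,\phi_\lambda$ exactly as in the proof of Theorem~\ref{Main:H}, with this $\Phi$.

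Fix a bump $\beta\in C_c^\infty(-\delta,\delta)$ with $\int\beta=1$. Define
$$f_{\lambda,\varepsilon}(r\theta)=f_0(r\theta)+\varepsilon\,\phi_\lambda(\theta)\,r^{-(n-1)}\,\beta\bigl(r-r(\theta)\bigr).$$

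\textbf{Section moments.} Polar coordinates on $u^\perp$ give
$$\int_{u^\perp}x_nf_{\lambda,\varepsilon}=\varepsilon\int_{S^{n-1}\cap u^\perp}\theta_n\phi_\lambda(\theta)\,d\theta,$$
because $f_0$ is even and the radial integral of the perturbation equals $1$. The same Fourier computation as in Theorem~\ref{Main:H} (via \cite[Lemma 3.7]{K}) then shows this equals $c\,\varepsilon\,g_\lambda(u)$, which is positive for $u\neq\pm e_n$.

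\textbf{Total moment and choice of $\lambda$.} Rotational invariance kills the first $n-1$ coordinates of the total moment. For the last one,
$$\int x_nf_{\lambda,\varepsilon}=\varepsilon\int_{S^{n-1}}\theta_n\phi_\lambda(\theta)\bigl(r(\theta)+O(\delta)\bigr)\,d\theta.$$
Applying spherical Parseval as in the theorem, this is $c\,\varepsilon\bigl(\int\widehat\Phi\,g_\lambda+O(\delta)\bigr)$. The $\lambda$-integral is negative at $\lambda=0$ and positive at $\lambda=1$, by \eqref{int_G} and \eqref{int_H}. Taking $\delta$ small and using continuity in $\lambda$ gives $\lambda_0$ with centroid exactly at the origin.

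The argument here is actually linear in $\varepsilon$, so there is no Taylor remainder to control. A cleaner alternative avoids the $O(\delta)$ term: note $\int r^n\cdot r^{-(n-1)}\beta(r-r(\theta))\,dr=r(\theta)+m(\theta)$ with $m$ smooth and small, and absorb $m$ into the definition of $\Phi$ from the start.

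\textbf{Main obstacle: $s$-concavity of the perturbed function.} The perturbation is $C^\infty$ and supported in the annulus $1/4-\delta\le|x|\le 1/2+\delta$, where $f_0\ge c>0$. There, $f^s=(f_0+\varepsilon\psi)^s$ is a $C^2$-small perturbation of $f_0^s$, whose Hessian is at most $-2I$. Hence $D^2 f^s<0$ on that annulus for $\varepsilon$ small, uniformly in $\lambda\in[0,1]$. Outside the annulus $f=f_0$. Since the support stays the unit ball and $f^s$ is locally concave on a convex set, $f^s$ is concave. Taking $\varepsilon$ small also keeps $f>0$ there, so $f\in C_s(\mathbb R^n)$.

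The delicate points I expect are, first, the smoothness of $\phi_\lambda$ across $e_n^\perp$, which is handled as in \cite[p.~8]{MTY2}. Second, the existence of an $r$ for which $\widehat\Phi<0$ near $\pm e_n$ must be verified in dimensions $3$ and $4$. I would try first to verify this by an explicit choice such as $r(\theta)=a+b\,\theta_n^2$ combined with a Fourier computation of $|x|^{-3}x_n^2$, which is a standard homogeneous distribution.
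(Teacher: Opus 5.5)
Your construction takes a genuinely different route from the paper's, and for $n\ge 3$ it goes through. The section moments are $\varepsilon\int_{S^{n-1}\cap u^\perp}\theta_n\phi_\lambda\,d\theta$, a positive multiple of $\varepsilon g_\lambda(u)$, exactly as in Theorem~\ref{Main:H}. The total moment is linear in $\varepsilon$ and affine in $\lambda$. The $s$-concavity follows from the $C^2$-smallness of a perturbation supported where $f_0$ is bounded below.

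The paper uses no Fourier analysis at all. It perturbs an even, strictly $s$-concave radial $g_s$ by $\varepsilon x_n b(|x|)$ with $b$ supported in $[1,2]$. It relies only on the fact that the polar Jacobian is $r^{n-1}$ on $\mathbb R^n$ but $r^{n-2}$ on $u^\perp$:
\[
\int_{\mathbb R^n}x_if\,dx=\varepsilon\int_{S^{n-1}}\theta_i\theta_n\,d\theta\int_1^2r^{n+1}b(r)\,dr,\qquad
\int_{u^\perp}x_nf\,dx=\varepsilon\int_{S^{n-1}\cap u^\perp}\theta_n^2\,d\theta\int_1^2r^{n}b(r)\,dr.
\]
Choosing $b$ with $\int r^{n+1}b=0<\int r^nb$ decouples the two conditions through the radial profile alone. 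There is no non-positive-definite $\Phi$, no interpolation in $\lambda$, and no smoothness issue for $\phi_\lambda$ across $e_n^\perp$.

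On each ray, your perturbation has total radial weight $1$ concentrated near the single radius $r(\theta)$. So the radial profile cannot separate the two moments, and the angular profile has to do it; that is why you need $\widehat\Phi$ to change sign. This loses the case $n=2$, which the paper needs: its corollary asserts $\nu_s(n)=1$ for all $n\ge 2$.

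For $n=2$, your section moment at $u$ is $2\varepsilon\,\theta_n\phi_\lambda(\theta)$ with $\theta\perp u$. Positivity of all section moments therefore forces $\theta_n\phi_\lambda\ge 0$ on $S^1$. Then $\int x_nf\,dx=\varepsilon\int_{S^1}\theta_n\phi_\lambda(\theta)\bigl(r(\theta)+m\bigr)\,d\theta>0$ whenever $r+m>0$. Equivalently, in the plane $\widehat\Phi(\xi)=2\pi\Phi(\xi^\perp)>0$ for every positive $\Phi$, so no choice of $r$ helps.

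The repair is already inside your framework. Take $r(\theta)\equiv r_0$, replace $\phi_\lambda(\theta)$ by $\theta_n$, and let $\beta$ change sign so that $\int\beta=1$ and $\int(r_0+t)\beta(t)\,dt=0$. This is exactly the paper's construction with $b(r)=r^{-n}\beta(r-r_0)$, and it makes the Fourier machinery unnecessary in every dimension.

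Two small corrections for $n\ge 3$:
\begin{itemize}
\item $m=\int t\beta(t)\,dt$ is a constant (zero if $\beta$ is even), not a function of $\theta$.
\item For $r(\theta)=\alpha+\gamma\theta_n^2$ one finds $\widehat\Phi(e_n)=c_n\bigl(\alpha-(n-2)\gamma\bigr)$. So for $n=3$ you need $\alpha<\gamma$, which is incompatible with $r$ taking values in $[1/4,1/2]$. Allow any range compactly inside $(0,1)$, or use a higher zonal harmonic.
\end{itemize}
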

				
				\begin{proof}
					Let $g_s$ be an $s$-concave function defined as follows:
					\begin{enumerate} \item If $-1/(n+1)<s<0$, $$g_s(x) = (1+|x|^2)^{1/s};$$ \item if $s=0$,
					 $$g_s(x)= e^{-|x|^2};$$
					 \item if $s>0$, $$g_s(x) =\begin{cases}
					(9-|x|^2)^{1/s}	, & |x|<3,\\
					0, & |x|\ge 3.						
						\end{cases}$$
						\end{enumerate}

					Define $$f(x) = g_s(x) + \varepsilon x_n b(|x|),$$
					where $\varepsilon>0$ is sufficiently small and $b$ is an infinitely smooth function on $\mathbb R$ with support in the interval $[1,2]$ and such that
					  $$\int_1^2 r^{n+1} b(r)\, dr = 0$$ and $$\int_1^2 r^{n} b(r)\, dr > 0.$$

					  Since $b$ has compact support, which is in the interior of the support of $g_s$,   and $g_s$ is strictly $s$-concave, $f$ is $s$-concave for small enough $\varepsilon$.
					  
					  Let us show that the centroid of $f$ is at the origin. For each $i=1,\ldots, n$, the function $x_i g_s(x)$ is odd, and therefore
					  \begin{align*}\int_{\mathbb R^n} x_i f(x)\, dx&= \int_{\mathbb R^n} x_i \left(  g_s(x) + \varepsilon x_n b(|x|) \right)\, dx= \varepsilon \int_{\mathbb R^n} x_i    x_n b(|x|) \, dx\\
					  	&= \varepsilon \int_{S^{n-1} } \theta_i    \theta_n\, d\theta \int_1^2 r^{n+1} b(r) \, dr=0.					  	
					  	\end{align*}

					  	For the restriction of $f$ to the subspace $u^\perp$, we have		  	
					  		  \begin{align*}\int_{u^\perp} x_n f(x)\, dx&= \int_{u^\perp} x_n \left(  g_s(x) + \varepsilon x_n b(|x|) \right)\, dx= \varepsilon \int_{u^\perp}     x_n^2 b(|x|) \, dx\\
					  		&= \varepsilon \int_{S^{n-1}\cap u^\perp }    \theta_n^2 \, d\theta \int_1^2 r^{n} b(r) \, dr>0,					  		
					  	\end{align*}
					  	if $u\ne \pm e_n$. 
					  
					\end{proof}
		Using 	Lemma \ref{Lemma:MR} and Proposition	\ref{Main:logconcave} we obtain the following.
				
	\begin{corollary}
	Let $-1/(n+1)<s<\infty$. Then	$\nu_s (n)=1$ for $n\ge 2$.
	\end{corollary}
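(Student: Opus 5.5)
The corollary follows by combining a lower bound and an upper bound on $\nu_s(n)$, proved separately.

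For the lower bound $\nu_s(n)\ge 1$, take any $f\in C_s(\mathbb R^n)$ and translate it so that $c(f)=0$. Translation preserves $s$-concavity, and it moves centroids of restrictions to hyperplanes in the same way. For an $s$-concave function with positive finite integral, the centroid lies in the interior of the support: the support is a convex set with nonempty interior, and the centroid of a density on a convex set with nonempty interior lies in that interior. So Lemma~\ref{Lemma:MR} applies and produces $u\in S^{n-1}$ with $\int_{u^\perp} x f(x)\,dx=0$. Dividing by $\int_{u^\perp} f>0$ gives $c_{u^\perp}(f)=0=c(f)$. The denominator is positive because $u^\perp$ passes through an interior point of the support and $f$ is positive on the interior of its support. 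Hence $\nu(f)\ge 1$. This argument works in every dimension $n\ge 2$, because the lemma's proof only minimizes the continuous function $F$ on $S^{n-1}$.

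For the upper bound $\nu_s(n)\le 1$, use the function $f$ from Proposition~\ref{Main:logconcave}. Its centroid is at the origin, and for $u\neq\pm e_n$ we have $\int_{u^\perp}x_nf>0$, so the $n$-th coordinate of $c_{u^\perp}(f)$ is positive and $c_{u^\perp}(f)\ne 0$. The only hyperplane through $c(f)$ that could have the right centroid is therefore $e_n^\perp$. By the lower bound, it must actually work (one can also check directly that $\int_{e_n^\perp}xf=\int_{e_n^\perp}xg_s=0$ by oddness). So $\nu(f)=1$.

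The step that needs care is checking that Proposition~\ref{Main:logconcave} holds for $n=2$ as well. Its computation only uses the polar-coordinate factorization and the moment conditions on $b$, so it is dimension-free. The genuinely delicate point, which I would verify most carefully, is the claim that $f=g_s+\varepsilon x_n b(|x|)$ remains $s$-concave for small $\varepsilon$. I would verify this by noting that $g_s^s$ (or $\log g_s$ when $s=0$) has Hessian uniformly definite with the appropriate sign on the annulus $1\le|x|\le 2$. A $C^2$-small, compactly supported perturbation supported in the interior of the support preserves this, and concavity is local, so $s$-concavity of $f$ follows.
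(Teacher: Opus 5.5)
Your proposal is correct and takes the same route as the paper, which obtains the corollary directly from two results: Lemma~\ref{Lemma:MR} gives the lower bound after translating so that $c(f)=0$, and Proposition~\ref{Main:logconcave} gives the upper bound, with $e_n^\perp$ the only admissible hyperplane. The details you add that the paper leaves implicit are sound: that the centroid lies in the interior of the support, that $\int_{u^\perp} f>0$, that the construction works for $n=2$, and the local Hessian argument for $s$-concavity of $g_s+\varepsilon x_n b(|x|)$.
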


\bigskip

{\bf Acknowledgment.} We would like to thank Roman Vershynin for suggesting to study functional analogues of Problems~\ref{GrunbaumProblem} and \ref{LoewnerProblem}.

\end{document}